\newtheorem{satz}{Theorem}[section]
\newtheorem{lem}[satz]{Lemma}
\newtheorem{cor}[satz]{Corollary}
\newtheorem{anm}[satz]{Remark}
\newtheorem{bsp}[satz]{Example}
\newcommand{\R}{\ensuremath{{\mathbb R}}}
\newcommand{\Ex}{\mathbb{E}}
\newcommand{\abs}[1]{\left\lvert#1 \right\rvert}
\newcommand{\norm}[1]{\left \lVert#1 \right\rVert}
\begin{document}

\title{An Inversion Formula for Orlicz Norms and Sequences of Random Variables}
\author{S\"oren Christensen \and Joscha Prochno  \and Stiene Riemer }
\date{\today}
\maketitle

\begin{abstract}
Given an Orlicz function $M$, we show which random variables $\xi_i$, $i=1,\ldots,n$ generate the associated Orlicz norm, {\it i.e.}, which random variables yield $\mathbb{E} \max\limits_{1\leq i \leq n}|x_i\xi_i| \sim \norm{(x_i)_{i=1}^n}_M$. As a corollary we obtain a representation for the distribution function in terms of $M$ and $M'$ which can be easily applied to many examples of interest. 
\end{abstract}

\textbf{Keywords:} {Orlicz Norms, Random Variables}\\[.2cm]
% \PACS{PACS code1 \and PACS code2 \and more}

\section{Introduction}\label{intro}

Over the years, various new tools and techniques were developed in the study of order statistics, where the $k$-th order statistic of a statistical sample of size $n$ is equal to its $k$-th smallest value. For general literature on order statistics see \cite{key-BC} or \cite{key-DN}. Order statistics are well known tools in non-parametric statistics and find application in wireless networks, compressed sensing and appear naturally in Banach space theory, when one considers sequences of random variables. They are used to study certain parameters associated to convex bodies and also provide a tool to compute the eigenvalue distributions of random matrices. Just to mention a few, see \cite{key-CDV}, \cite{key-GGMP}, \cite{key-G}, \cite{key-MP} or \cite{key-R}. 
Over the last years, there was put tremendous effort in giving estimates for the order of these quantities. Very precise results were obtained in \cite{key-GLSW4} (see also the references therein).\\
In \cite{key-GLSW} the authors studied the important maximal case, {\it i.e.}, the case of the $1$-st order statistic. Given independent identically distributed (iid) random variables $X_1,\ldots,X_n$, the new approach started there, was to give estimates for the quantity
  \begin{equation}\label{EQU Erwartungswert ueber Maximum}
      \mathbb{E}\max\limits_{1\leq i \leq n}|x_iX_i|
  \end{equation}
in terms of Orlicz norms, where the Orlicz norm depends on the distribution of the $X_i$'s (definitions are given below). These estimates turned out to be quite useful, {\it e.g.}, recently, in \cite{key-AP} estimates for support functions of random polytopes were obtained using this approach.    

The case of discrete random variables was first studied in \cite{key-KS1} and \cite{key-KS2}. To be more precise, the authors considered averages over permutations and obtained that for any $x\in \R^n$ and $a_1\geq \ldots \geq a_n>0$
  \begin{equation}\label{EQU discrete}
    \frac{1}{n!} \sum_{\pi} \max_{1\leq i \leq n}\abs{x_ia_{\pi(i)}} \sim \norm{x}_M,
  \end{equation}
where $M$ is defined by $M^*\left(\sum_{i=1}^k a_i\right) = \frac{k}{n}$ for $k=1,\ldots,n$ and extended linearly, where $M^*$ is the dual function of $M$. They also obtained an inverse result, {\it i.e.}, they proved that given an Orlicz function $M$, the choice
  \begin{equation}\label{EQU sequence}
    a_i = n\left( M^{*-1}\left( \frac{i}{n}\right) - M^{*-1}\left( \frac{i-1}{n}\right) \right), i=1,\ldots,n,
  \end{equation}
yields the estimate in (\ref{EQU discrete}).    

Here, we prove the inverse result in the setting of (\ref{EQU Erwartungswert ueber Maximum}), {\it i.e.}, given an Orlicz function $M$, we show how to choose the distribution of the random variables $X_i$, $i=1,\ldots,n$ so that (\ref{EQU Erwartungswert ueber Maximum}) is equivalent to the Orlicz norm of the scalar sequence $(x_i)_{i=1}^n$. The result we obtain (especially Corollary \ref{COR inverse distribution function}) is in most cases easier to apply than (\ref{EQU sequence}), since it is easier to calculate the derivative instead of the inverse of the dual function (see definitions below). Our main result is the following:

\begin{satz}\label{Hauptsatz} 
    Let $M$ be an Orlicz function with right-hand derivative $M^{'}$ such that $M^{'}(0)=0$. We define 
    \[Q(\cdot)=\int y \delta_{\frac{1}{y}}(\cdot) dM^{'}(y).\]
   Whenever 
    \begin{equation}\label{eq:finite}
    \int y dM^{'}(y)<\infty,
    \end{equation}
    we get for all $X_1,\ldots,X_n$ independent, distributed according to $Q(\cdot)/Q([0,\infty))$ and all $x=(x_i)_{i=1}^n\in\R^n$
    $$
       c_1\norm{x}_M\leq \mathbb{E}\max\limits_{1\leq i \leq n}|x_iX_i|\leq c_2\norm{x}_M,
    $$
    where $c_1,c_2$ are positive absolute constants.
\end{satz}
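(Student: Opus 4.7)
The plan is to reduce $\mathbb{E}\max_i|x_iX_i|$ to an integral of $\min(1,\sum_i\bar F(t/|x_i|))$, make the tail $\bar F$ explicit in terms of $M$, and then conclude via a single bridging identity together with a convexity-based monotonicity fact. Since $Q$ sits on $[0,\infty)$ the $X_i$ are nonnegative, so
\[\mathbb{E}\max_i|x_iX_i|=\int_0^\infty\bigl(1-\prod_iF(t/|x_i|)\bigr)\,dt.\]
The standard two-sided estimate $\tfrac12\min(1,\sum p_i)\le 1-\prod(1-p_i)\le \min(1,\sum p_i)$ (the lower inequality follows from $1-\prod(1-p_i)\ge 1-e^{-\sum p_i}$ combined with $1-e^{-x}\ge\tfrac12\min(1,x)$) reduces the theorem to bounding $\int_0^\infty\min(1,S(t))\,dt$ above and below by multiples of $\|x\|_M$, where $S(t)=\sum_i\bar F(t/|x_i|)$. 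Unwinding the push-forward in the definition of $Q$ and integrating by parts gives $\bar F(t)=\tfrac1c\int_0^{1/t}y\,dM'(y)=\tfrac1c\bigl(\tfrac1tM'(1/t)-M(1/t)\bigr)$ with $c=\int_0^\infty y\,dM'(y)$.

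The key computation is the bridging identity
\[\int_a^\infty\bar F(s)\,ds=\frac{aM(1/a)}{c},\qquad\text{equivalently}\qquad\int_A^\infty\bar F(t/|x_i|)\,dt=\frac{A\,M(|x_i|/A)}{c},\]
obtained by swapping order in the double integral defining $\bar F$ and applying integration by parts once more. Setting $\rho:=\|x\|_M$ so that $\sum_iM(|x_i|/\rho)=1$, the upper bound drops out by splitting at $t=\rho$:
\[\int_0^\infty\min(1,S(t))\,dt\le \rho+\sum_i\int_\rho^\infty\bar F(t/|x_i|)\,dt=\rho+\tfrac{\rho}{c}\sum_iM(|x_i|/\rho)=\rho(1+\tfrac1c).\]

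For the lower bound I would use the monotonicity fact that $\phi(A):=A\sum_iM(|x_i|/A)$ is non-increasing, since $\phi'(A)=\sum_i\bigl[M(|x_i|/A)-(|x_i|/A)M'(|x_i|/A)\bigr]\le 0$ by the convexity inequality $M(u)\le uM'(u)$. Letting $t^*=\inf\{t:S(t)\le 1\}$, monotonicity of $S$ yields $\int_0^\infty\min(1,S)\,dt=t^*+\int_{t^*}^\infty S(t)\,dt=t^*+\phi(t^*)/c$. A case split then finishes: if $t^*\ge\rho$ this is $\ge\rho$, while if $t^*<\rho$ then $\phi(t^*)\ge\phi(\rho)=\rho$ so it is $\ge\rho/c$.

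The principal obstacle is spotting the bridging identity $\int_a^\infty\bar F=aM(1/a)/c$; it is precisely the mechanism that lets the inverse formula for $M$ ``talk to'' the Orlicz norm, and without it the calculation stalls. Once it is in hand, the upper bound is essentially automatic and the lower bound reduces to the routine monotonicity of $\phi$. A small side remark: the hypothesis $\int y\,dM'(y)<\infty$ not only gives $X_i<\infty$ a.s.\ but forces $M$ to grow at most linearly at infinity, which is exactly what makes $\mathbb{E}X_i<\infty$ and all the tail integrals above convergent.
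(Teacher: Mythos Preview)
Your argument is correct but follows a genuinely different route from the paper's. The paper gives a two-line reduction to the black-box Theorem~\ref{THM Schuett u Werner} of Gordon--Litvak--Sch\"utt--Werner: disintegrating $Q=\int Q_y\,dM'(y)$ with $Q_y=y\delta_{1/y}$, one computes $\int_0^s\int_{1/t\le x}x\,dQ_y(x)\,dt=(s-y)^+$, and then the Choquet representation $M(s)=\int(s-y)^+\,dM'(y)$ of Lemma~\ref{Choquet-type Representation} shows that the Orlicz function manufactured by Theorem~\ref{THM Schuett u Werner} from $Q$ is exactly $M$, so the estimate follows at once. You instead bypass Theorem~\ref{THM Schuett u Werner} and prove the equivalence from first principles, via the layer-cake formula for $\mathbb{E}\max$, the union-bound sandwich for $1-\prod(1-p_i)$, and your bridging identity $\int_a^\infty\bar F=aM(1/a)/c$; in effect you are reproving a version of Theorem~\ref{THM Schuett u Werner} tailored to this particular $Q$. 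The paper's approach is shorter and isolates the structural point that the extremal Orlicz functions $(s-y)^+$ correspond to the extremal (Dirac) measures; yours is self-contained and makes the mechanism linking $\bar F$ to $\|\cdot\|_M$ fully explicit. One caveat common to both: your constants $(1+1/c)$ and $\tfrac12\min(1,1/c)$ visibly depend on $c=Q([0,\infty))=\int y\,dM'(y)$, and the paper's invocation of Theorem~\ref{THM Schuett u Werner} hides the same dependence (since $X_i\sim Q/c$ yields the Orlicz function $M/c$ rather than $M$), so ``absolute'' should in either case be read modulo the normalization of $M$.
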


It was shown in \cite{key-GLSW} that for iid $p$-stable random variables  $X_1,\ldots,X_n$, $1<p<2$, and for all $x=(x_i)_{i=1}^n\in\R^n$
$$
   c_1\norm{x}_p\leq E\max\limits_{1\leq i \leq n}|x_iX_i|\leq c_2\norm{x}_p,
$$
{\it i.e.}, a $p$-stable distribution generates the $\ell_p$ norm. Here, $p$-stable means that the characteristic function of $X_i$ is given by $\psi(u)=\exp(-|u|^p)$.
Furthermore, in \cite{key-GLSW} and \cite{key-GLSW4} the authors proved that for independent, identically standard gaussian random variables $X_1,\ldots,X_n$, and for all $x=(x_i)_{i=1}^n\in\R^n$
$$
   c_1\norm{x}_M\leq \mathbb{E}\max\limits_{1\leq i \leq n}|x_iX_i|\leq c_2\norm{x}_M,
$$
where
  $$
      M(s) = \sqrt{\frac{2}{\pi}} \int_0^s e^{-\frac{1}{2t^2}} dt.
  $$ 
However, this obviously does not give the $\ell_2$-norm. In fact, the obtained Orlicz norm is far from that, because of the gaussian tails. At first this seems astonishing, since we deal with the case of a 2-stable distribution. For $p=2$, one could expect to get the $\ell_2$-norm, since a sequence of independent gaussian random variables spans a subspace of $L_p$ isometric isomorphic to $\ell_2$. On the other hand, from a probabilistic point of view this seems somehow natural, because a $2$-stable distribution is very different from $p$-stable distributions for $1<p<2$, {\it e.g.}, existence of moments.

Therefore, the question arose which distribution generates the $\ell_2$-norm. The second and third named author gave an answer in \cite{key-PR}. However, this answer is limited to the special situation and it seems that it cannot be generalized. In fact, our main result here immediately yields that $\log-\gamma_{1,p}$, $1<p<\infty$ distributed random variables generate a truncated $\ell_p$-norm and especially for $p=2$, we obtain the $\ell_2$-norm.

\section{Preliminaries}

We recall that a function $M:[0,\infty) \to [0,\infty)$ is called an Orlicz function, if it is convex, $M(0)=0$ and $M(t)>0$ for all $t>0$. 
If there exists a $t_0>0$ such that $M(t)=0$ for $t\leq t_0$, we call it a degenerated Orlicz function.  
The dual function $M^*$ of an Orlicz function $M$ is given by the Legendre transform
  $$
    M^*(x) = \sup_{t\in[0,\infty)}(xt-M(t)).
  $$
Again, $M^*$ is an Orlicz function and $M^{**}=M$. Hence, $M^*$ uniquely determines $M$. In case $M(t)=t^p$, $1\leq p<\infty$, we just have $\norm{\cdot}_M=\norm{\cdot}_p$. For all $x=(x_i)_{i=1}^n\in\R^n$, we define the Orlicz norm $\norm{\cdot}_M$ by 
$$
   \norm{x}_M=\inf\left\{t>0 : \sum\limits_{i=1}\limits^nM\left(\frac{|x_i|}{t}\right)\leq 1\right\}
$$
and define the Orlicz space $\ell_M^n$ to be $\R^n$ equipped with $\norm{\cdot}_M$.
For more details and a thorough introduction to Orlicz spaces we refer the reader to \cite{key-KR} or \cite{key-RR}.

We will use the notation $a\sim b$ to express that there exist two positive absolute constants $c_1, c_2$ such that $c_1a\leq b\leq c_2 a$ and use $a\sim_{p} b$ in case the constants depend on some parameter $p>0$.\\

Since the Lebesgue-Stieltjes integral allows integration by parts, we obtain the following result:

  \begin{lem}\label{Choquet-type Representation}
    Let $M$ be an Orlicz function (possibly degenerated) with right-hand derivative $M'$. Then, for all $x\geq 0$,
      $$
        M(x) = \int_{[0,\infty)} (x-y)^+ dM'(y).
      $$
  \end{lem}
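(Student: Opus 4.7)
The plan is to reduce the identity to the fundamental relation $M(x)=\int_0^x M'(t)\,dt$, which holds for any convex Orlicz function with $M(0)=0$ and right-hand derivative $M'$. As a first step I would observe that $(x-y)^+$ vanishes for $y>x$, so the integrand has support in the compact interval $[0,x]$ and the right-hand side of the claim equals $\int_{[0,x]}(x-y)\,dM'(y)$.

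Next I would apply Fubini, which is equivalent to the Lebesgue-Stieltjes integration by parts hinted at in the sentence preceding the lemma. Writing $x-y=\int_y^x ds$ for $y\in[0,x]$ and swapping the order of integration gives
\[
\int_{[0,x]}(x-y)\,dM'(y)=\int_0^x\!\int_{[0,s]}dM'(y)\,ds=\int_0^x M'(s)\,ds=M(x),
\]
where I use the convention that the Lebesgue-Stieltjes measure $dM'$ satisfies $dM'([0,s])=M'(s)$ (equivalently, $M'(0-)=0$, consistent with extending $M$ by $0$ to $(-\infty,0]$). The integration-by-parts variant uses $u(y)=x-y$ and $v(y)=M'(y)$ on $[0,x]$: the boundary terms reduce to $(x-x)M'(x)-xM'(0-)=0$, and the remaining term is $\int_0^x M'(s)\,ds$, producing the same value.

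The only delicate point, and hence the main (though mild) obstacle, is the bookkeeping at $y=0$: when $M$ has a nontrivial linear part near the origin, i.e., $M'(0+)>0$, the measure $dM'$ carries an atom of mass $M'(0+)$ at $\{0\}$, contributing $x\,M'(0+)$ to the integral. The convention $dM'([0,s])=M'(s)$ absorbs this atom correctly and no case split between degenerate and non-degenerate Orlicz functions is needed; the identity then holds uniformly. The argument is essentially a one-line computation once the right convention is fixed.
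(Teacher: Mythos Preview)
Your proof is correct and follows essentially the same route as the paper: both arguments reduce the identity to $\int_0^x M'(s)\,ds=M(x)$ via Lebesgue--Stieltjes integration by parts (you phrase it as Fubini, but also give the integration-by-parts variant explicitly), and both handle the possible atom at $y=0$ by the convention $M'(0-)=0$. The paper simply splits off the term $xM'(0)$ before integrating by parts on $(0,x]$, which is exactly your treatment of the atom written out explicitly.
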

  \begin{proof}
We use integration by parts. For all $x \geq 0$ 
      \begin{eqnarray*}
        && \int_{[0,\infty)}(x-y)^+ dM'(y) \\
        & = & xM'(0) + \int_{(0,x]} (x-y)^+dM'(y) \\
        & = & xM'(0) + (x-x)^+M'(x)-(x-0)^+M'(0) - \int_{(0,x]} M'(y)d(x-y)^+ \\
        & = & \int_{(0,x]} M'(y)dy ~=~ M(x).
      \end{eqnarray*}
  \end{proof}
This easy observation, however, forms the starting point for our inversion formula. 

\begin{anm}\label{rem:choquet}
This integral representation is of Choquet-type, since the functions $x\mapsto(x-y)^+$ are (up to normalization) the extreme points of the convex cone of Orlicz functions.
\end{anm}

In \cite{key-GLSW4} the authors proved the following theorem:

\begin{satz}\label{THM Schuett u Werner}
Let $X_1,\ldots,X_n$ be iid random variables and $\mathbb E \abs{X_1} < \infty$. For all $s\in\R_{\geq 0}$ let
$$
  M(s)=\int\limits_0\limits^s\int\limits_{\frac{1}{t}\leq |X_1|}|X_1|dPdt.
$$ 
Then, for all $x=(x_i)_{i=1}^n\in\R^n$,
$$
   c_1\norm{x}_M\leq E\max_{1\leq i \leq n}|x_iX_i|\leq c_2\norm{x}_M,
$$
where $c_1,c_2$ are positive absolute constants.
\end{satz}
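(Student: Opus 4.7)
The plan is to apply Theorem \ref{THM Schuett u Werner} as a black box to iid $X_1,\ldots,X_n$ with the prescribed distribution $P := Q/C$, where $C := \int y\,dM'(y)$ is finite by hypothesis (\ref{eq:finite}). The entire proof reduces to computing the Sch\"utt--Werner Orlicz function $\tilde M$ associated to this choice of $X_i$ and checking that $\tilde M \sim M$.

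The key identity is that $Q$ is the pushforward of the measure $y\,dM'(y)$ under $y \mapsto 1/y$, so for any nonnegative measurable $f$,
\[
\mathbb{E}[f(X_1)] \;=\; C^{-1}\int y\,f(1/y)\,dM'(y).
\]
Applied with $f(x) = x\,\mathbf{1}_{\{x \geq 1/t\}}$ (note $X_1 \geq 0$), this gives
\[
\mathbb{E}\bigl[X_1\,\mathbf{1}_{\{X_1 \geq 1/t\}}\bigr] \;=\; C^{-1}\int \mathbf{1}_{\{y\leq t\}}\,dM'(y) \;=\; M'(t)/C,
\]
so integration in $t$ produces $\tilde M(s) = M(s)/C$. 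Before invoking Theorem \ref{THM Schuett u Werner}, one must confirm $\mathbb{E}|X_1|<\infty$: by the pushforward formula this quantity equals $M'(\infty)/C$, so the task is to deduce $M'(\infty)<\infty$ from (\ref{eq:finite}). This follows by integration by parts,
\[
\int_0^a y\,dM'(y) \;=\; \int_0^a\bigl(M'(a)-M'(t)\bigr)\,dt \;\geq\; T\bigl(M'(a)-M'(T)\bigr),
\]
valid for any $0\leq T\leq a$ by monotonicity of $M'$; uniform boundedness in $a$ of the left-hand side then forces $M'(a)$ to stay bounded.

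Once Theorem \ref{THM Schuett u Werner} is applied, it yields $\mathbb{E}\max_i|x_iX_i| \sim \norm{x}_{M/C}$, and it remains to pass from $\norm{x}_{M/C}$ back to $\norm{x}_M$. These norms are equivalent up to a factor bounded by $\max(C,1/C)$: indeed, convexity of $M$ with $M(0)=0$ gives $M(Kt)\geq KM(t)$ for $K\geq 1$, and comparing the defining infima of the two Orlicz norms yields, for instance, $\norm{x}_{M/C}\leq \norm{x}_M\leq C\,\norm{x}_{M/C}$ when $C\geq 1$ (and the analogous estimate for $C<1$). The main (mild) obstacle of the proof is the bookkeeping around the normalization constant $C$ together with the integrability check; the identification of $\tilde M$ itself is a clean application of the pushforward formula combined with the Choquet-type representation implicit in Lemma \ref{Choquet-type Representation}.
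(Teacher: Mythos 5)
The statement you were asked to address is Theorem~\ref{THM Schuett u Werner}, which the paper does not prove: it is quoted verbatim as a result of Gordon, Litvak, Sch\"utt and Werner from \cite{key-GLSW4} (``In \cite{key-GLSW4} the authors proved the following theorem''). Your proposal opens with ``The plan is to apply Theorem~\ref{THM Schuett u Werner} as a black box,'' which means you are using the very statement you were supposed to prove as your main tool. As a proof of Theorem~\ref{THM Schuett u Werner} this is circular and does not work: nothing in your argument bounds $\mathbb{E}\max_i|x_iX_i|$ from scratch in terms of an Orlicz norm; it simply transforms one application of the cited theorem into another.

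What you have actually written is a proof of the paper's main result, Theorem~\ref{Hauptsatz}, and viewed in that light the argument is sound and very close to the paper's own: you identify $Q$ as the pushforward of $y\,dM'(y)$ under $y\mapsto 1/y$, compute the Sch\"utt--Werner Orlicz function of $X_1\sim Q/C$ as $M/C$, and invoke Theorem~\ref{THM Schuett u Werner}. The paper reaches the same identity $\int_0^s\int_{1/t\le x}x\,dQ(x)\,dt=M(s)$ via the pointwise evaluation $\int_0^s\int_{1/t\le x}x\,dQ_y(x)\,dt=(s-y)^+$ together with Lemma~\ref{Choquet-type Representation}, which is just the Choquet-decomposition phrasing of your pushforward computation. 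Two small points where you are more careful than the paper's exposition: you verify $\mathbb{E}|X_1|=M'(\infty)/C<\infty$ directly from \eqref{eq:finite} (the integration-by-parts bound $\int_0^a y\,dM'(y)\geq T(M'(a)-M'(T))$ is correct), and you explicitly track the normalization constant $C=Q([0,\infty))$ that the paper suppresses, noting that it enters the equivalence constants. If your intended target really was Theorem~\ref{THM Schuett u Werner}, you would need a genuinely different argument --- the one in \cite{key-GLSW4} --- rather than this reduction.
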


The theorem says that given a certain distribution, one can generate a corresponding Orlicz norm, {\it e.g.}, $p$-stable random variables generate the $\ell_p$-norm.
We would like to point out that $M$ can also be written in the following way
  \begin{equation}\label{EQU representation orlicz function}
    M(s) = \int_0^s \left(\tfrac{1}{t} \mathbb P (\abs{X}\geq \tfrac{1}{t}) + \int_{\frac{1}{t}}^{\infty} \mathbb P (\abs{X}\geq u) du \right) dt.
  \end{equation}

\section{The Inversion Formula}

Before we give the proof of our main theorem, we want to make some remarks to get a better understanding. First of all, note that condition \eqref{eq:finite} is no restriction, because to define an Orlicz norm $\norm{\cdot}_M$, 
it is enough to define an Orlicz function $M$ on the interval $[0,T]$, where $M(T)=1$, and then extend it linearly.

Furthermore, whenever $M^{'}(0)=0$ we see that $Q(\cdot) = \int y \delta_{\frac{1}{y}}(\cdot) dM^{'}(y)$ obviously defines a measure on $\R$, and since \eqref{eq:finite} is no restriction, $Q(\cdot)/Q([0,\infty))$ is a probability measure. The distribution function of $Q(\cdot)$ is given by
     $$
         F(x) = Q((-\infty,x]) = \int y \delta_{\frac{1}{y}}((-\infty,x]) dM^{'}(y) = \int\limits_{1/x}\limits^{\infty} y dM'(y),
     $$
since $\delta_{\frac{1}{y}}((-\infty,x])=1$ whenever $\tfrac{1}{y}\in(-\infty,x]$, i.e., if $y \geq \tfrac{1}{x}$. 
In the same way we obtain the tail distribution function $\bar F= 1-F$, i.e., for all $x>0$,
    \begin{equation}\label{EQU inverse distribution of Q}
       \bar F (x) = \int_0^{\frac{1}{x}} y dM'(y),
    \end{equation}
where we restrict ourselves to the non-negative real line, since we defined our Orlicz functions on $[0,\infty)$.     

We would also like to point out that in the setting of Theorem \ref{Hauptsatz}, for all Q-integrable function $h$
      \begin{equation} \label{EQU Zerlegung des Masses Q}
         \int h(x)dQ(x) = \int \int h(x) dQ_y(x) dM'(y),
      \end{equation}
where $Q_y(\cdot) = y\delta_{\frac{1}{y}}(\cdot)$, $y>0$.      
We will now prove the main theorem.

\begin{proof}[Proof of Theorem \ref{Hauptsatz}]
  For some $y>0$, let $Q_y(\cdot) = y\delta_{\frac{1}{y}}(\cdot)$. Then, we have for all $s\geq 0$,
    \begin{equation}\label{EQU Integral liefert Positivteil}
       \int_0^s \int_{\frac{1}{t} \leq x} x dQ_y(x) dt  =  y \int_0^s \int_{\frac{1}{t} \leq x} x d\delta_{\frac{1}{y}}(x) dt. \\
     \end{equation}
  Now, whenever $\frac{1}{t} \leq \frac{1}{y}$, we obtain
    $$
       \int_{\frac{1}{t} \leq x} x d\delta_{\frac{1}{y}}(x) = \frac{1}{y},
    $$
  and $0$ otherwise. So we have
\begin{equation}\label{eq:extreme points}
       \int_0^s \int_{\frac{1}{t} \leq x} x dQ_y(x) dt = \int_0^s \mathbbm{1}_{\{\frac{1}{t}\leq \frac{1}{y}\}} dt
       = (s-y)^+.
\end{equation}
  Therefore, using (\ref{EQU Zerlegung des Masses Q}), we obtain
    $$
       \int_0^s \int_{\frac{1}{t}\leq x} x dQ(x)dt = \int \int_0^s \int_{\frac{1}{t}\leq x} x dQ_y(x)dt dM'(y). 
    $$
  Using equation (\ref{EQU Integral liefert Positivteil}), we get
    $$
       \int \int_0^s \int_{\frac{1}{t}\leq x} x dQ_y(x)dt dM'(y) = \int (s-y)^+dM'(y).
    $$
  Lemma \ref{Choquet-type Representation} yields
    $$
        \int_0^s \int_{\frac{1}{t}\leq x} x dQ(x)dt = M(s).
    $$
   Now, note that $\int ydQ(y)<\infty$. Therefore, by Theorem \ref{THM Schuett u Werner}, there exist positive absolute constants $c_1,c_2$ such that for all
  $x=(x_i)_{i=1}^n$
    $$
       c_1\norm{x}_M\leq E\max\limits_{1\leq i \leq n}|x_iX_i|\leq c_2\norm{x}_M.
    $$ 
\end{proof}

\begin{anm}
Keeping Remark \ref{rem:choquet} in mind, equation \eqref{eq:extreme points} shows that the extreme points of the convex cone of Orlicz functions correspond to the extreme points of the convex cone of probability measures - the Dirac measures. This is the key idea to obtain the inversion formula.
\end{anm}

As a corollary we obtain the following useful representation of the tail distribution function $\bar F = 1- F$.

\begin{cor}\label{COR inverse distribution function}
Let $M$ be a twice  continuously differentiable Orlicz function satisfying \eqref{eq:finite} and $M'(0)=0$. Then, for any random variable $X$ distributed according to $Q(\cdot)/Q([0,\infty))=\int y \delta_{\frac{1}{y}}(\cdot) dM^{'}(y)/Q([0,\infty))$, and for all $x>0$,
$$
    \bar F(x) = \bar F_X (x) = \int_0^{1/x} y M^{''}(y)dy =\frac{1}{x}M'\left(\frac{1}{x}\right) - M\left(\frac{1}{x}\right) .
$$
In particular, for all $x>0$,
$$
    \bar F(x)=\int_{x}^{\infty} \frac{1}{y^3} M''\left(\frac{1}{y}\right)dy.
$$  
\end{cor}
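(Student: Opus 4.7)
The starting point is the identity \eqref{EQU inverse distribution of Q}, which already provides $\bar F(x) = \int_0^{1/x} y\, dM'(y)$ directly from the definition of $Q$ in Theorem \ref{Hauptsatz}. The task of the corollary is thus to convert this Lebesgue--Stieltjes integral into three more explicit forms, exploiting the extra regularity that $M$ is twice continuously differentiable.

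First, since $M \in C^2$, the Stieltjes measure $dM'$ has density $M''$ with respect to Lebesgue measure on $[0,\infty)$, so that
$$
   \bar F(x) \;=\; \int_0^{1/x} y\, M''(y)\, dy,
$$
which is the first claim. To obtain the closed form $\tfrac{1}{x} M'(1/x) - M(1/x)$, I would integrate by parts in this integral with $u = y$ and $dv = M''(y)\, dy$. The boundary contribution at $y=0$ vanishes by the hypothesis $M'(0)=0$, and the remaining integral $\int_0^{1/x} M'(y)\, dy$ collapses to $M(1/x)$ by the fundamental theorem of calculus together with $M(0)=0$. For the final representation, I would simply substitute $u = 1/y$ (so $du = -dy/y^2$) in $\int_0^{1/x} y\, M''(y)\, dy$; the limits swap and a factor $1/y^3$ appears, producing $\int_x^\infty y^{-3} M''(1/y)\, dy$ after relabelling.

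I do not anticipate a serious obstacle: the corollary is essentially a bookkeeping consequence of \eqref{EQU inverse distribution of Q}. The only step that genuinely uses a hypothesis is the vanishing of the boundary term at $y=0$ in the integration by parts, for which $M'(0)=0$ is invoked. Finiteness and absolute convergence of all integrals involved follow from condition \eqref{eq:finite}, so no further justification is needed beyond careful application of elementary calculus.
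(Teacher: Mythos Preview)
Your proposal is correct and follows essentially the same line as the paper's proof: start from \eqref{EQU inverse distribution of Q}, replace $dM'$ by $M''\,dy$ using the $C^2$ hypothesis, integrate by parts (using $M'(0)=0$ and $M(0)=0$ for the boundary terms), and then substitute $y\mapsto 1/y$ for the final representation. The only cosmetic difference is that the paper opens by invoking Theorem~\ref{Hauptsatz} before citing \eqref{EQU inverse distribution of Q}, which your write-up (rightly) treats as redundant for the purposes of this corollary.
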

\begin{proof}
  By Theorem \ref{Hauptsatz}, we can represent $M$ in the following way
    $$
        M(s) = \int\limits_0\limits^s\int\limits_{\frac{1}{t}\leq |x|}|x|dQ(x)dt,
    $$
  where $Q(\cdot) = \int y \delta_{\frac{1}{y}}(\cdot) dM'(y)$.  Equation (\ref{EQU inverse distribution of Q}) gives
    $$
        \bar F(x) = \int_0^{1/x} y dM'(y).
    $$
  Since $M$ is twice differentiable
    $$
        \int_0^{1/x} y dM'(y) = \int_0^{1/x} y M''(y) dy.
    $$
  Integration by parts yields 
    $$
       \int_0^{1/x} y M''(y) dy = \frac{1}{x}M'\left(\frac{1}{x}\right) - M\left(\frac{1}{x}\right).
    $$
  In particular, by a change of variables,
    $$
        \bar F(x)=\int_{x}^{\infty} \frac{1}{y^3} M''\left(\frac{1}{y}\right)dy.
    $$     
\end{proof}

Thus, in the case of a twice continuously differentiable Orlicz function, we have a continuous density function.  Especially, we obtain the distribution function
  $$
      F(x) = 1- \left\{\frac{1}{x}M'\left(\frac{1}{x}\right)-M\left(\frac{1}{x}\right)\right\}.
  $$
Furthermore, Corollary \ref{COR inverse distribution function} says that for any $x>0$
  $$
      \mathbb P (X \geq x) = \frac{1}{x}M'\left(\frac{1}{x}\right) - M\left(\frac{1}{x}\right),
  $$ 
where $X$ is distributed according to $Q(\cdot)/Q([0,\infty))$.

Now we consider two examples to show how easily our result can be applied to obtain the distribution of the random variables so that they generate the given Orlicz norm. We start with the case of $\ell_p$-norms.

\begin{bsp}  
  Let $1<p<\infty$ and $M: [0,\infty)\to[0,\infty)$, $t\mapsto t^p$. Then, for all $x=(x_i)_{i=1}^n\in\R^n$, we get
	$$
	     \Ex\max\limits_{1\leq i \leq n}|x_iX_i|\sim_p \norm{x}_{p},
          $$
 in the case that $X_i$, $i=1,\ldots,n$ are iid random variables with   
    $$
       \mathbb P ( X_i \geq x )=px^{-p}
    $$
  for all $x>0$.   
\end{bsp}
 \begin{proof}         
	By Corollary \ref{COR inverse distribution function} we know that the tail distribution function of $X_i$, $i=1,\ldots,n$, is given by
	$$\bar{F}(x)=\int\limits_0\limits^{\frac{1}{x}}yM^{''}(y)dy,$$
	where $M(y)=y^p$. Thus, we get
	\begin{eqnarray*}
		\bar{F}(x)=\int\limits_0\limits^{\frac{1}{x}}yp(p-1)y^{p-2}dy=px^{-p}.
	\end{eqnarray*}
	Therefore, the distribution function $F$ has to be of the form $F(x)=1-px^{-p}$,
	which is up to an absolute constant the same as the distribution function of a $\log\gamma(p,1)$-distributed random variable. Using Theorem \ref{THM Schuett u Werner} it is easy to show that those 
	random variables generate the $\ell_p$-norm (see \cite{key-PR}).\\
\end{proof}

We would also like to emphasize that it is important to know the distribution which yields the $\ell_2$-norm, since this is a main ingredient to obtain embeddings into $L_1$ (see \cite{key-P} and \cite{key-S1}).

The following example is in connection with gaussian random variables.

\begin{bsp}
  Let $M: [0,\infty)\to[0,\infty)$ be given by
    \begin{equation} \label{EQU orlicz function for gaussians}
        M(s) = \sqrt{\frac{2}{\pi}} \int_0^s e^{-\frac{1}{2t^2}} dt.
    \end{equation}
  Then, for all $x=(x_i)_{i=1}^n\in\R^n$, we get
  $$
       \Ex\max\limits_{1\leq i \leq n}|x_iX_i|\sim \norm{x}_{M},
  $$  
 in the case that $X_i$, $i=1,\ldots,n$ are iid random variables with
  $$
      \mathbb P( X_i \geq x) =  \frac{1}{\sqrt{2\pi}} \int_x^{\infty} e^{-\frac{y^2}{2}} dy
  $$ 
 for all $x>0$.  
\end{bsp}
\begin{proof}
  Again, to obtain the distribution of the random variables $X_i$, $i=1,\ldots,n$ which generate the norm given by (\ref{EQU orlicz function for gaussians}), we use Corollary \ref{COR inverse distribution function} and get
  $$
      \bar{F}(x)=\int_{x}^{\infty} \frac{1}{y^3} M''\left(\frac{1}{y}\right)dy.
  $$
So an easy computation immediately yields that for any $x>0$
  $$
      \mathbb P (X_i \geq x) = \frac{1}{\sqrt{2\pi}} \int_x^{\infty} e^{\frac{y^2}{2}}dy.
  $$
Theorem \ref{THM Schuett u Werner} with the representation of $M$ given in (\ref{EQU representation orlicz function}) shows that these random variables generate the Orlicz norm $\norm{\cdot}_M$.  	
\end{proof}

The last example shows that in order to obtain the Orlicz norm given by the Orlicz function defined in (\ref{EQU orlicz function for gaussians}), we can take iid standard gaussian random variables.

~\\
~\\
{\bf S\"oren Christensen}\\
Mathematisches Seminar\\
Christian-Albrechts-Universit\"at zu Kiel\\
Ludewig Meyn Str. 4\\
24098 Kiel, Germany\\
{\em christensen@math.uni-kiel.de}\\
~\\
~\\
{\bf Joscha Prochno}\\
Department of Mathematical and Statistical Sciences\\
University of Alberta\\
524 Central Academic Building\\
Edmonton, Alberta\\
Canada T6G 2G1\\
{\em prochno@ualberta.ca}\\
~\\
~\\
{\bf Stiene Riemer}\\
Mathematisches Seminar\\
Christian-Albrechts-Universit\"at zu Kiel\\
Ludewig Meyn Str. 4\\
24098 Kiel, Germany\\
{\em riemer@math.uni-kiel.de}\\


\begin{thebibliography}{dudl}
\bibitem[1]{key-AP}{\sc D. Alonso-Gutierr\'ez, J. Prochno}, {\em Estimating Support Functions of Random Polytopes via Orlicz Norms}, preprint, 2012
\bibitem[2]{key-BC}{\sc N. Balakrishnan, A.C. Cohen}, {\em Order Statistics and Inference}, New York, NY: Academic Press (1991)
\bibitem[3]{key-CDV}{\sc A. Cohen, W. Dahmen, R. DeVore}, {\em Compressed sensing and best k-term approximation}, J. Amer. Math. Soc. 22 (2009), 211-231
\bibitem[4]{key-DN}{\sc H. A. David, H. N. Nagaraja}, {\em Order statistics}, 3rd ed., Wiley Series in Probability and Statistics. Chichester: John Wiley \& Sons (2003)
\bibitem[5]{key-G}{\sc O. Gu\'edon}, {\em Gaussian Version of a Theorem of Milman and Schechtman}, Positivity 1, No.1 (1997), 1-5
\bibitem[6]{key-GGMP}{\sc Y. Gordon, O. Gu\'edon, M. Meyer, A. Pajor}, On the Euclidean sections of some Banach spaces and operator spaces, Math. Scandinavica 91 (2002), 247-268
\bibitem[7]{key-GLSW}{\sc Y. Gordon, A. Litvak, C. Sch\"utt, E. Werner}, {\em Orlicz Norms of Sequences of Random Variables}, Ann. Probab., 30 (2002), 1833--1853
\bibitem[8]{key-GLSW1}{\sc Y. Gordon, A. Litvak, C. Sch\"utt, E. Werner}, { \em Geometry of spaces between zonoids and polytopes}, Bull. Sci. Math., 126 (2002), 733--762
\bibitem[9]{key-GLSW2}{\sc Y. Gordon, A. Litvak, C. Sch\"utt, E. Werner}, { \em Minima of sequences of Gaussian random variables}, C.R. Acad. Sci. Paris, Ser 1, Math., 340 (2005), 445-448
\bibitem[10]{key-GLSW3}{\sc Y. Gordon, A. Litvak, C. Sch\"utt, E. Werner}, { \em On the minimum of several random variables}, Proc. AMS, 134 (2006), 3665-3675
\bibitem[11]{key-GLSW4}{\sc Y. Gordon, A. E. Litvak, C. Sch\"utt, E. Werner}, {\em Uniform estimates for order statistics and Orlicz functions}, Positivity, to appear
\bibitem[12]{key-MP}{\sc V. A. Marchenko, L. A. Pastur}, {\em Distribution of eigenvalues in certain sets of random matrices}, Mat. Sb. (N.S.), 72 (1967), 407-535 (Russian)
\bibitem[13]{key-KR}{\sc M. A. Krasnoselski, Y. B. Rutickii}, {\em Convex Functions and Orlicz Spaces}, P. Noordhoff LTD., Groningen (1961)
\bibitem[14]{key-KS1}{\sc S. Kwapie\'n, C. Sch\"utt}, {\em Some combinatorial and probabilistic inequalities and their application to Banach space theory}, Studia Math., 82, 91--106, 1985
\bibitem[15]{key-KS2}{\sc S. Kwapie\'n, C. Sch\"utt}, {\em Some combinatorial and probabilistic inequalities and their application to Banach space theory II}, Studia Math., 95(2), 14--154, 1989
\bibitem[16]{key-P}{\sc J. Prochno}, {\em The Embedding of $2$-concave Musielak-Orlicz Spaces into $L_1$ via $\ell_2$-Matrix-Averages}, preprint, 2012
\bibitem[17]{key-PR}{\sc J. Prochno, S. Riemer}, {\em On the Maximum of Random Variables on Product Spaces}, Houston J. Math., to appear, 2012
\bibitem[18]{key-R}{\sc M. Rudelson}, {\em Lower estimates for the singular values of random matrices}, C. R. Math. Acad. Sci. Paris 342 (2006), no. 4, 247-252
\bibitem[19]{key-RR} {\sc M. M. Rao, Z. D. Ren}, {\em Theory of Orlicz Spaces}, Marcel Dekker (1991)
\bibitem[20]{key-S1}{\sc C. Sch\"utt}, {\em On the embedding of 2-concave Orlicz spaces into $L_1$}, Stillwater, Oklahoma and Kiel, Studia Math., 113(1), 1995
\end{thebibliography}
\end{document}